\newtheorem{theorem}{Theorem}[section]
\newtheorem*{theorem*}{Theorem}
\newtheorem{definition}{Definition}[section]
\numberwithin{equation}{section}
\begin{document}
\title{{\bf Senior Seminar II}\vspace{5mm}\\On the Oscillations of  Second Order Linear Differential Equations}
\date{Spring 2013}
\author{Eric Robert Kehoe\\University of Massachusetts: Lowell}
\maketitle

\newpage

\section{Introduction to Research}
\subsection{The Discriminant}
This paper examines the properties of second order linear differential equations, in particular the oscillatory behavior of solutions to these equations \cite{1,2,3}. The research started from a mistake in utilizing the well known characteristic equation, used in solving constant coefficient second order equations, to predict oscillations in solutions to variable coefficient second order equations, i.e.

\begin{equation}
y''(x) +  b(x)y'(x) + c(x)y(x) = 0 \label{ODE1}\\
\end{equation}

\noindent where the characteristic polynomial associated with this equation is,

\begin{equation}
m^{2}(x) + b(x)m(x) + c(x) = 0 \label{charPoly1}\\
\end{equation}

\noindent We also know from elementary algebra that, for every fixed $x_0\in\mathbb{R}$ such that \eqref{charPoly1} is well defined, we will have complex roots when,\\

\begin{equation}
 b^{2}(x_0) - 4c(x_0) < 0 \label{desc}
\end{equation}

\noindent Implying that $m(x) = \alpha(x) \pm i\beta(x)$ where $\alpha(x)$ and $\beta(x)$ are real valued functions. The term on the left hand side of the inequality above is called the discriminant of the characteristic equation. In the case where $b(x)$ and $c(x)$ are constant this implies $\alpha(x)$ and $\beta(x)$ are constant and a general solution of,

\begin{equation}
y(x) = \mathrm{e}^{\alpha x}(A\sin(\beta x )+B\cos(\beta x)) \notag
\end{equation}
 
\noindent for \eqref{ODE1} where $A$ and $B$ are constants . Since these solutions oscillate when $m$ is complex, this motivated the use of the discriminant to predict where solutions to \eqref{ODE1} will oscillate, assuming $b(x)$ and $c(x)$ are non-constant. Intuitively, one is tempted to think that if at every fixed $x_0$ in an interval $I$  \eqref{desc} is satisfied, then the actual solutions to \eqref{ODE1} will oscillate on that interval based on the oscillating behavior of the local solutions,  $y(x) = \mathrm{e}^{\alpha(x_0) x}(A\sin(\beta(x_0) x )+B\cos(\beta(x_0) x)$,  at every point in the interval.\\\\
\noindent Later we will find out that this is the wrong intuition. Nevertheless, the discriminant sucessfully predicted oscillatory behavior in solutions to the Parabolic Cylinder,the Bessel, the Hermite, and the Airy differential equations. i.e.

\begin{equation}
\begin{aligned}
y'' + (\frac{1}{4}x^{2}-a)y &= 0 &&\text{(Parabolic Cylinder)}\\ \notag
y'' -xy &= 0 &&\text{(Airy)}\\
y''+\frac{1}{x}y'+(1-\frac{n^2}{x^2})&= 0 &&\text{(Bessel)}\\ 
y''-2xy'+2\lambda y &= 0 &&\text{(Hermite)}\\
\end{aligned}
\end{equation}
\noindent For the Parabolic Cylinder we have,
\begin{equation}
D = -4(\frac{1}{4}x^{2}-a) < 0 \Leftrightarrow x < -2\sqrt{a} \text{\hspace{3mm} or\hspace{3mm}} x > 2\sqrt{a} \notag
\end{equation}

\noindent By taking $a = 16$, we have $x < -8$ or $x > 8$\\
\vspace{1mm}\\
\centerline{\includegraphics[scale=.2]{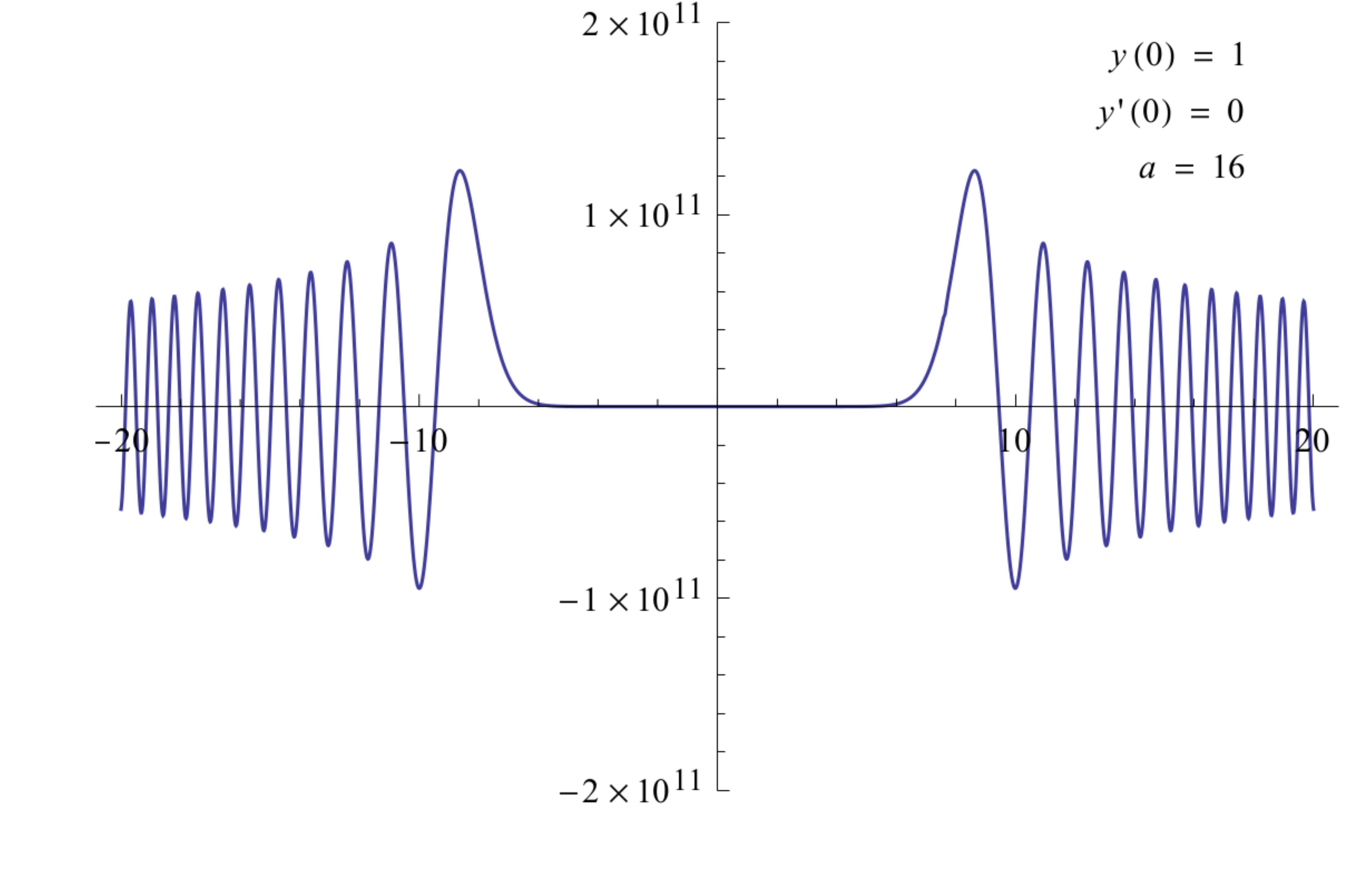}}
\vspace{3mm}\\

\noindent For the Airy we have,
\begin{equation}
D = -4(-x) < 0 \Leftrightarrow x < 0 \notag
\end{equation}\\
\vspace{1mm}\\
\centerline{\includegraphics[scale=.2]{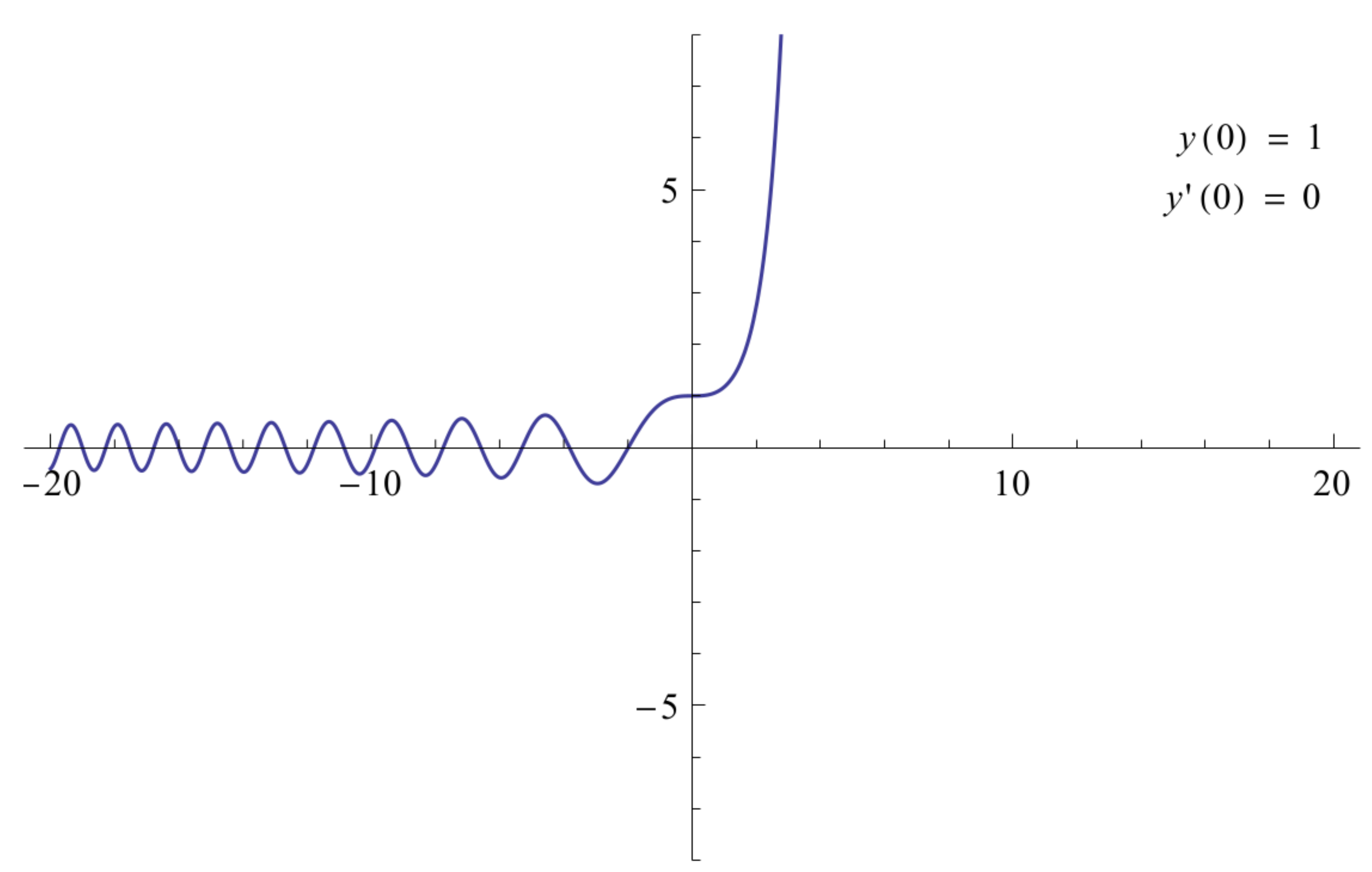}}
\newpage
\noindent For the Bessel we have,
\begin{equation}
D = \left(\frac{1}{x}\right)^{2}-4\left(1-\frac{n^{2}}{x^{2}}\right) < 0 \Leftrightarrow  x < -\sqrt{n^{2}+\frac{1}{4}} \text{\hspace{3mm} or\hspace{3mm}} x > \sqrt{n^{2}+\frac{1}{4}} \notag
\end{equation}\\
\vspace{1mm}\\
\centerline{\includegraphics[scale=.4]{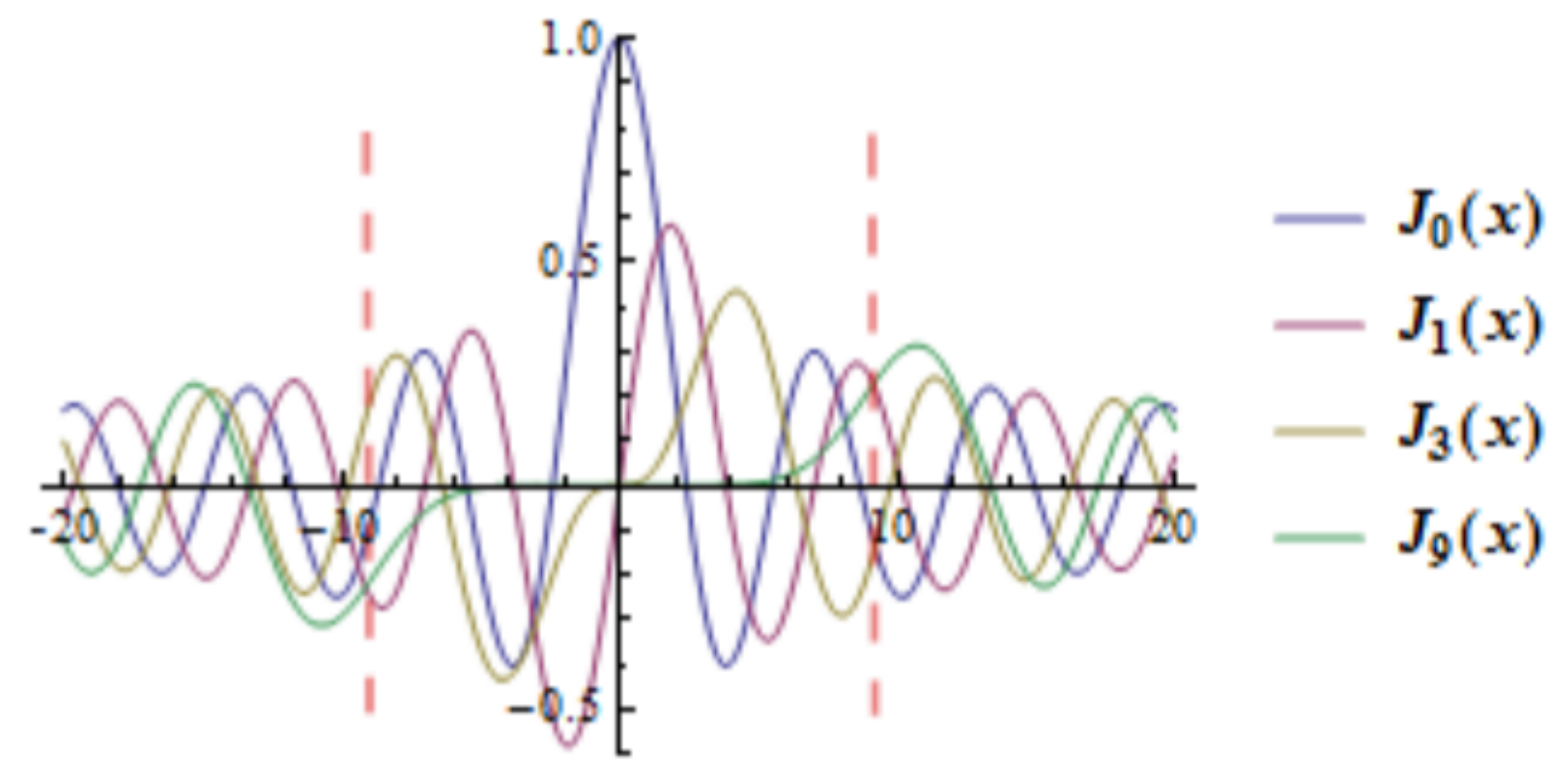}}\\
\vspace{1mm}\\
In the case of $J_9(x)$, $D < 0$ when $x < -\frac{5\sqrt{13}}{2}$ or $x > \frac{5\sqrt{13}}{2}$, which is indicated by the red verticals in the figure above.\\
\vspace{1mm}\\
\noindent For the Hermite we have,
\begin{equation}
D =(-2x)^{2}-4(2\lambda)<0 \Leftrightarrow -\sqrt{2\lambda} < x < \sqrt{2\lambda} \notag
\end{equation}\\
By taking $\lambda = 18$, we have $-6 < x < 6$.\\
\vspace{1mm}\\
\centerline{\includegraphics[scale=.2]{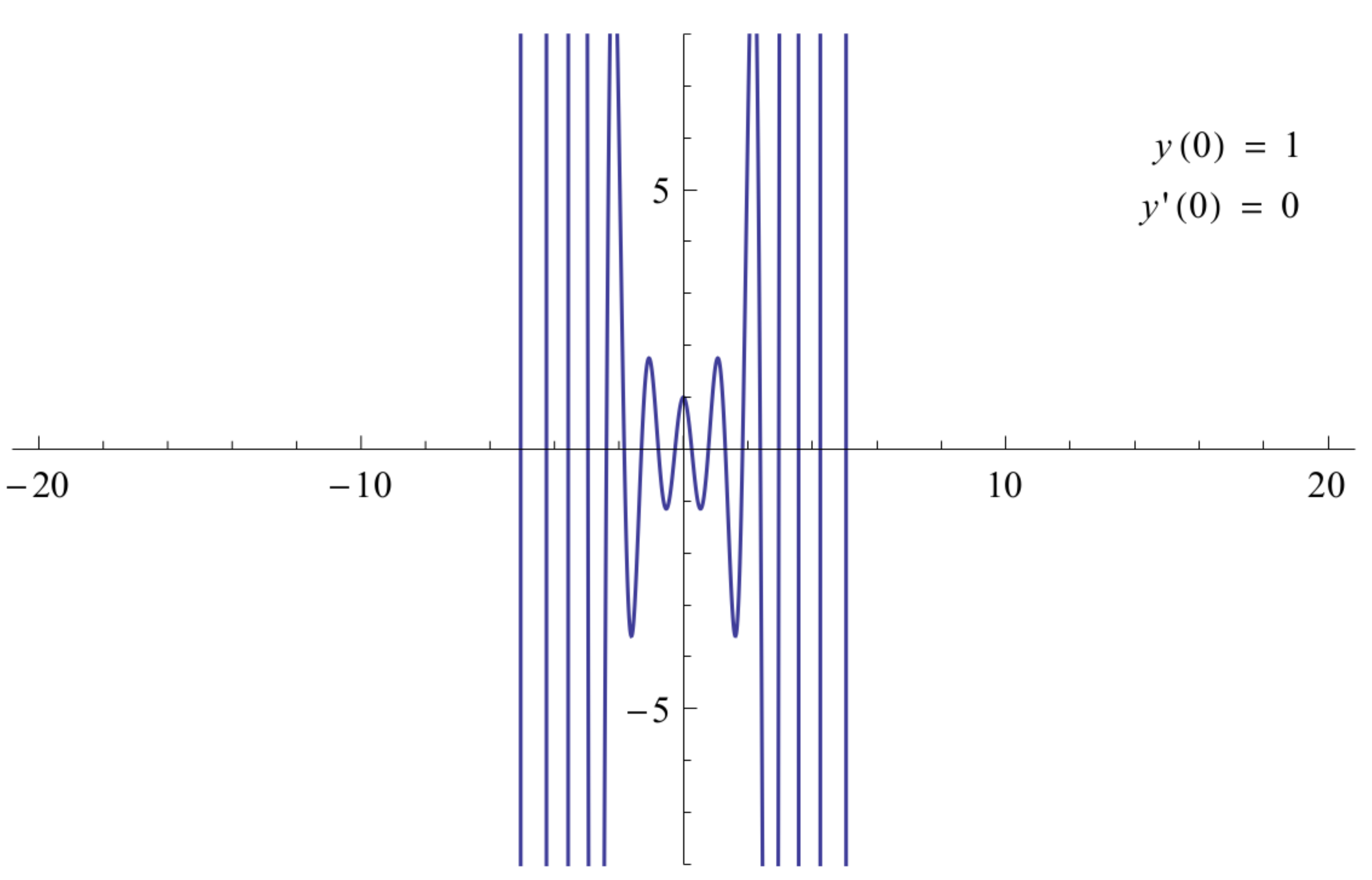}}\\
\vspace{1mm}\\
\noindent In each example above, the discriminant correctly predicts where the solutions will oscillate. These empircal findings motivated the generalization of the\\ Euler transformation, used to produce the characteristic equation in the case of constant coefficients, in order to prove the validity of the discriminant.

\subsection{The Riccati Equation}
Consider the differential equation,
\begin{equation}
y''(x) +  b(x)y'(x) + c(x)y(x) = 0 \label{ODE2}\\
\end{equation}
Then let $y(x) = \mathrm{e}^{\int{m}(x)\mathrm{d}x}$. We have,
\begin{equation}
\begin{aligned}
y'(x) &= m(x)\mathrm{e}^{\int{m}(x)\mathrm{d}x}\\
y''(x) &= m'(x)\mathrm{e}^{\int{m}(x)\mathrm{d}x}+m^{2}(x)\mathrm{e}^{\int{m}(x)\mathrm{d}x}\notag
\end{aligned}
\end{equation}
\noindent Substituing this into \eqref{ODE2} yields,
\begin{equation}
\begin{aligned}
\mathrm{e}^{\int{m}(x)\mathrm{d}x}\left(m'(x) +  m^{2}(x) + b(x)m(x) + c(x)\right) = 0&\\
\vspace{1mm}\\
\Rightarrow -\frac{dm}{dx} = m^{2}(x) + b(x)m(x) + c(x)&\\ \label{RicDE}
\end{aligned}
\end{equation}
Since $\forall x\in\mathbb{R}$ we have $\mathrm{e}^{\int{m}(x)\mathrm{d}x} \not= 0$.\\
\vspace{1mm}\\
This equation is called a scalar Riccati equation. It is a non-linear first order differential equation. In the case of constant coefficients, m is constant and equation \eqref{RicDE} becomes the well known characteristc equation. In the case that $b^{2}(x)-4c(x)<0$, then \eqref{RicDE} can be factored as such,
\begin{equation}
 -\frac{dm}{dx} = \left(m(x) + \frac{b(x)}{2}\right)^{2}+\left(\frac{\sqrt{4c(x)-b^{2}(x)}}{2}\right)^{2} \notag
\end{equation}
Although interesting, the Riccati equation did not yield the reasoning behind the predictive nature of the discriminant.

\section{Oscillation on Infinite Intervals}
We begin this section with a counter example to the discriminant. Consider the equation,
\begin{equation}
y''+\frac{1}{x}y'+\frac{k^{2}}{x^{2}}y=0 \notag
\end{equation}
\noindent This equation yields a particular solution of $y=\sin(k\ln x)$, which clearly oscillates by any sensical definition of oscillating. Yet for $k<\frac{1}{2}$ we have,
\begin{equation}
 D=\frac{1-4k^{2}}{x^{2}}>0 \notag
\end{equation}
\noindent This counter example motivated a different definition of the discriminant for predicting when solutions of \eqref{ODE1} will oscillate and not oscillate. The new discriminant was found via transformation of \eqref{ODE1} into normal form.
\begin{definition}
Let $\psi(x)$ be a twice differential non-zero real-valued function, we say that $\psi(x)$ oscillates on an infinite interval $I\subseteq\mathbb{R}$ if $\psi(x)$ vanishes infinitely many times on $I$.\\
\vspace{1mm}\\
Note: An infinite interval is defined as any interval of the form $I = [a,\infty)$ or $I = (-\infty,a]$ or $I=(-\infty,\infty)$ where $a\in\mathbb{R}$.
\end{definition}
\noindent Consider the differential equation,
\begin{equation}
u''(x) + Q(x)u(x) = 0 \label{nODE}
\end{equation}
\noindent where $Q(x)$ is a real-valued function, continuous on it's domain. The differential equation above is said to be in {\bf normal form}, and in fact any differential equation of the general form in \eqref{ODE1} can be put in normal form by a simple transformation.\\
\vspace{1mm}\\
\noindent Let $y = u\mathrm{e}^{-\frac{1}{2}\int{b\ \mathrm{d}x}}$ in \eqref{ODE1}. Then,
\begin{equation}
\begin{aligned}
y' &= \mathrm{e}^{-\frac{1}{2}\int{b\ \mathrm{d}x}}\left(u'-\frac{1}{2}bu\right)\\ \notag
y'' &= \mathrm{e}^{-\frac{1}{2}\int{b\ \mathrm{d}x}}\left(u''-bu'-\frac{1}{2}b'u+\frac{1}{4}b^{2}u\right)\\
\end{aligned}
\end{equation}
\begin{equation}
\begin{aligned}
&\Rightarrow\  \mathrm{e}^{-\frac{1}{2}\int{b\ \mathrm{d}x}}\left[u''-bu'-\frac{1}{2}b'u+\frac{1}{4}b^{2}u+b\left(u'-\frac{1}{2}bu\right)+cu\right]=0\\
&\Rightarrow\ u''-\frac{1}{2}b'u-\frac{1}{4}b^{2}u+cu=0\  \Rightarrow\ u''-\frac{1}{4}\left(b^{2}-4c+2b'\right)u=0\\
\vspace{1mm}\\
&\Rightarrow\ u''+Qu=0\\ \notag
\end{aligned}
\end{equation}
\noindent where $Q(x)= -\frac{1}{4}\left(b^{2}(x)-4c(x)+2b'(x)\right)$

\begin{theorem}{\bf{Sturm's Comparison  Theorem} \cite{1}}
Let $\psi_1(x)$ and $\psi_2(x)$ be two non-trivial solutions to,
\begin{equation}
u''+Q_1(x)u=0\ \ \ \text{and}\ \ \ u''+Q_2(x)u=0 \notag
\end{equation}
respectively, on an interval I. If $Q_1$ and $Q_2$ are continuous functions such that $Q_1>Q_2$ on I, then between any two consecutive zeros $x_1$ and $x_2$ of $\psi_2$, $\psi_1$ vanishes at least once.
\end{theorem}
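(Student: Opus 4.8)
The plan is to argue by contradiction using a Wronskian-type functional that measures the interaction between the two solutions. Without loss of generality I would first normalize signs so that $\psi_2 > 0$ on the open interval $(x_1,x_2)$; this is legitimate because $x_1$ and $x_2$ are \emph{consecutive} zeros, so $\psi_2$ keeps one fixed sign throughout $(x_1,x_2)$, and replacing $\psi_2$ by $-\psi_2$ leaves its zero set unchanged. I would then suppose, toward a contradiction, that $\psi_1$ never vanishes on $(x_1,x_2)$. By continuity $\psi_1$ also has a single fixed sign there, and I may assume $\psi_1 > 0$ on $(x_1,x_2)$.

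The central object is the Wronskian
\begin{equation}
W(x) = \psi_1(x)\psi_2'(x) - \psi_1'(x)\psi_2(x). \notag
\end{equation}
Differentiating and substituting the two equations, that is, replacing $\psi_1''$ by $-Q_1\psi_1$ and $\psi_2''$ by $-Q_2\psi_2$, collapses the derivative to
\begin{equation}
W'(x) = \bigl(Q_1(x)-Q_2(x)\bigr)\psi_1(x)\psi_2(x). \notag
\end{equation}
Here the hypothesis $Q_1 > Q_2$ does the essential work: since $\psi_1,\psi_2 > 0$ on $(x_1,x_2)$, the right-hand side is strictly positive there, so $W$ is continuous on $[x_1,x_2]$ with $W' > 0$ on the interior, hence strictly increasing, giving $W(x_2) > W(x_1)$.

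Next I would evaluate the boundary terms. Because $\psi_2(x_1)=\psi_2(x_2)=0$, the Wronskian reduces to $W(x_i) = \psi_1(x_i)\psi_2'(x_i)$ at each endpoint. Since $\psi_2$ rises from $0$ at $x_1$ into positive values and falls back to $0$ at $x_2$, its slopes satisfy $\psi_2'(x_1)\geq 0$ and $\psi_2'(x_2)\leq 0$; combined with $\psi_1(x_1),\psi_1(x_2)\geq 0$ this forces $W(x_1)\geq 0$ and $W(x_2)\leq 0$. Then $W(x_2) > W(x_1) \geq 0$ yields $W(x_2) > 0$, contradicting $W(x_2)\leq 0$, so $\psi_1$ must vanish somewhere in $(x_1,x_2)$.

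The step I expect to require the most care is the sign analysis at the endpoints and the guarantee that the two solutions genuinely have constant sign. The strict contradiction actually survives with only the weak endpoint bounds $W(x_1)\geq 0 \geq W(x_2)$, since strict monotonicity alone produces $W(x_2) > W(x_1)$; nonetheless, to be fully rigorous I would invoke the uniqueness theorem for the linear initial value problem to rule out $\psi_2'(x_1)=0$, which together with $\psi_2(x_1)=0$ would force $\psi_2\equiv 0$ and contradict non-triviality, thereby confirming that $\psi_2$ truly changes value across each zero. The constancy of sign of $\psi_1$ on the open interval is then immediate from continuity once the no-vanishing assumption is in force.
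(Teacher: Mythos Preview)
Your argument is correct and follows essentially the same Wronskian-by-contradiction route as the paper: normalize signs so that $\psi_1,\psi_2>0$ on $(x_1,x_2)$, compute $W'=(Q_1-Q_2)\psi_1\psi_2>0$ to get strict monotonicity, and contrast this with the endpoint signs $W(x_1)\geq 0\geq W(x_2)$. If anything, your version is slightly more careful in making the contradiction hypothesis explicit and in invoking IVP uniqueness to justify the endpoint sign analysis.
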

\begin{proof}
Let $x_1$ and $x_2$ be two consecutive zeroes of $\psi_2$. Without loss of generality, assume $\psi_1>0$ and $\psi_2>0$ on $(x_1,x_2)$. We then have that $\psi_2'(x_1)\geq 0$ and $\psi_2'(x_2)\leq 0$. Since  $\psi_1>0$ on $(x_1,x_2)$ then we also have that $\psi_1(x_1)\geq 0$ and $\psi_1(x_2)\geq 0$ This implies the Wronskian, defined as $W=\psi_1\psi_2'-\psi_2\psi_1'$, is $W(x_1)=\psi_1(x_1)\psi_2'(x_1)\geq 0$ and $W(x_2)=\psi_1(x_2)\psi_2'(x_2)\leq 0$. This yields the inequality, $W(x_1)\geq W(x_2)$.\\
\vspace{1mm}\\
\centerline{\includegraphics[scale=.2]{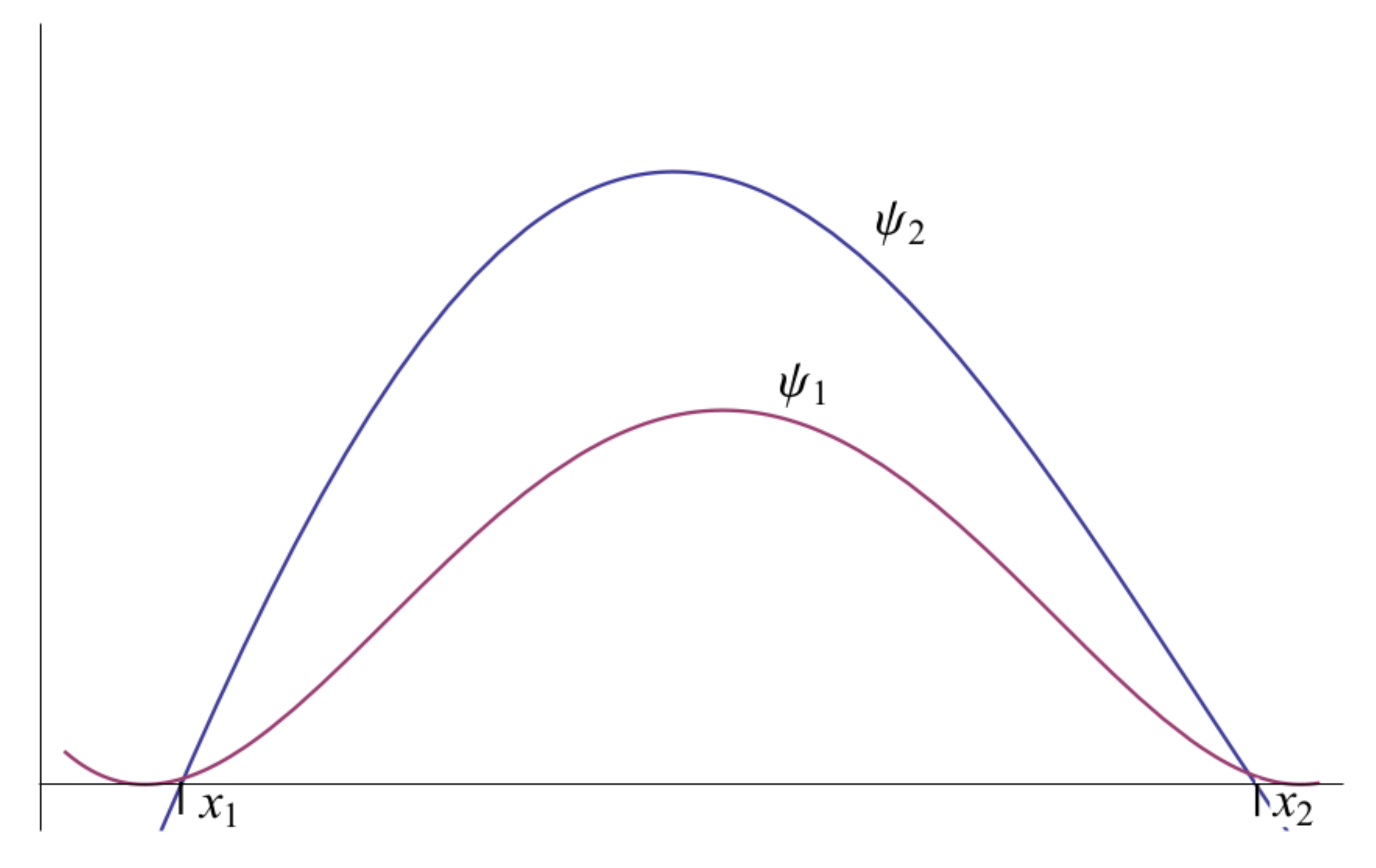}}\\
\vspace{1mm}\\
We then have $W'=\psi_1\psi_2''-\psi_2\psi_1''=(Q_1-Q_2)\psi_1\psi_2>0$ on $(x_1,x_2)$. This implies that $W$ is strictly increasing on $(x_1,x_2)$, so that $W(x_1)<W(x_2)$. But this contradicts  $W(x_1)\geq W(x_2)$.\\
\qedhere
\end{proof}
\begin{theorem}
Consider the differential equation,
\begin{equation}
y''(x) +  b(x)y'(x) + c(x)y(x) = 0 \label{ODE3}\\
\end{equation}
If $Q=-\frac{1}{4}\left(b^{2}-4c+2b'\right)$ is bounded below by a positive number on an infinite interval $I$, then a non-trivial solution, $\psi(x)$, to \eqref{ODE3} oscillates on I.
\end{theorem}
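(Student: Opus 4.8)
The plan is to reduce the statement to a direct application of Sturm's Comparison Theorem (Theorem~1.1). First I would put \eqref{ODE3} into normal form using the transformation $y = u\,\mathrm{e}^{-\frac{1}{2}\int b\,\mathrm{d}x}$ already derived in the excerpt, which yields the equation $u'' + Q u = 0$ with $Q = -\frac{1}{4}(b^2 - 4c + 2b')$. The key observation is that the exponential factor $\mathrm{e}^{-\frac{1}{2}\int b\,\mathrm{d}x}$ never vanishes, so the zeros of a non-trivial solution $\psi(x)$ of \eqref{ODE3} coincide exactly with the zeros of the corresponding solution $u(x)$ of the normal-form equation. Hence $\psi$ oscillates on $I$ if and only if $u$ does, and it suffices to show that $u$ vanishes infinitely many times on $I$.

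The comparison equation I would choose is the constant-coefficient equation $v'' + m^2 v = 0$, where $m$ is picked so that $0 < m^2 < Q$ on all of $I$; this is possible precisely because the hypothesis guarantees $Q$ is bounded below by a positive number, say $Q \geq L > 0$, so I can take any $m$ with $0 < m^2 < L$. A non-trivial solution of the comparison equation is $v(x) = \sin(mx)$ (or $\sin(m(x-a))$ shifted to start at the endpoint of $I$), whose consecutive zeros are spaced a distance $\pi/m$ apart. Since $I$ is an infinite interval, $v$ has infinitely many consecutive pairs of zeros lying inside $I$.

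With $Q_1 = Q > Q_2 = m^2$ on $I$, Sturm's Comparison Theorem applies and tells me that between every pair of consecutive zeros of $v$ the solution $u$ must vanish at least once. Because there are infinitely many such disjoint intervals inside $I$, the solution $u$ accumulates infinitely many zeros on $I$, and therefore so does $\psi$. This establishes that $\psi$ oscillates on $I$ in the sense of the Definition.

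The main obstacle I anticipate is not the comparison step itself but the careful bookkeeping needed to invoke Sturm's theorem legitimately. Theorem~1.1 requires $Q_1$ and $Q_2$ to be continuous on $I$ and requires the \emph{strict} inequality $Q_1 > Q_2$; I must therefore confirm that the coefficients $b, c$ (and hence $Q$) are continuous and choose $m^2$ strictly below the infimum $L$ rather than equal to it, so that strictness holds everywhere. A secondary subtlety is the logical translation from ``vanishes at least once between each consecutive pair of zeros of $v$'' to ``vanishes infinitely many times on $I$'': I would make this rigorous by noting that the zeros of $\sin(m(x-a))$ form an infinite increasing sequence in $I$, producing infinitely many disjoint open intervals, each contributing at least one distinct zero of $u$.
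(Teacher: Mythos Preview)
Your proposal is correct and follows essentially the same route as the paper: transform to normal form, use the positive lower bound on $Q$ to pick a constant $m^2<Q$ (the paper writes $\varepsilon^2$), compare against $v''+m^2v=0$ via Sturm, and transfer the infinitely many zeros of $\sin(mx)$ back to $\psi$ through the nonvanishing exponential factor. Your added remarks on strictness of the inequality and on the disjoint-intervals counting argument only make the presentation tighter than the paper's.
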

\begin{proof}
Let $\psi(x)$ be a non-trivial solution of \eqref{ODE3}. Since $Q$ is bounded below by a positive number on $I$,  there exists an $\varepsilon>0$ such that $Q>\varepsilon^{2}>0$ on $I$. By writing \eqref{ODE3} in normal form we have,
\begin{equation}
 u''-\frac{1}{4}\left(b^{2}-4c+2b'\right)u=0 \label{Normal}
\end{equation}
We compare this to the differential equation,
\begin{equation}
u''+\varepsilon^{2}u=0 \label{Compare}
\end{equation}
By transformation we have that $\xi_1(x)=\psi(x)\mathrm{e}^{\frac{1}{2}\int{b(x)\ \mathrm{d}x}}$ is a solution of \eqref{Normal}. Sturm's Comparison Theorem implies that $\xi_1(x)$ vanishes at least once between consecutive zeros of $\xi_2(x)=sin(\varepsilon x)$ on $I$. Since $I$ is an infinite interval we have that $\xi_2(x)$ vanishes infinitely many times on I, and it follows that $\xi_1(x)$ vanishes infinitely many times on I. Since $\xi_1(x)$ is a solution of \eqref{Normal} and $\mathrm{e}^{-\frac{1}{2}\int{b(x)\ \mathrm{d}x}}>0$, we have that $\psi(x)=\xi_1(x)\mathrm{e}^{-\frac{1}{2}\int{b(x)\ \mathrm{d}x}}$ vanishes where ever $\xi_1(x)$ vanishes. Thus concluding $\psi(x)$ oscillates on $I$.\\
\qedhere
\end{proof}
\noindent Note: It is not enough that $Q>0$ ; it should always be verified that $Q$ is bounded below by a positive number in order to guarentee oscillations of solutions. Consider the example below,
\begin{equation}
y''+\frac{1}{4x^{2}}y=0 \notag
\end{equation}
In this equation $Q(x)=\frac{1}{4x^{2}}>0$ for all x. But $Q$ is not bounded below by a positive number; $Q$ approaches zero as x approaches infinity. This equation has an elementary solution of $\psi(x)=\sqrt{x}$, which clearly does not oscillate on any interval.

\subsection{Examples}
We now refer back to the examples in section 1 and prove the oscillatory behavior for solutions to the Parabolic Cylinder, Airy and Bessel type differential equations. Let $\varepsilon>0$. For the Parabolic Cylinder we have,
\begin{equation}
\begin{aligned}
&y'' +\left(\frac{1}{4}x^{2}-a\right)y = 0\\ \notag
&\Rightarrow Q(x)= -\frac{1}{4}\left(-4\left( \frac{1}{4}x^{2}-a \right)\right)\\
&\Leftrightarrow Q(x)= \left( \frac{1}{4}x^{2}-a \right)\\
&\Leftrightarrow Q(x)\geq\varepsilon\Leftrightarrow  x \leq -2\sqrt{a+\varepsilon} \text{\hspace{3mm} or\hspace{3mm}} x \geq 2\sqrt{a+\varepsilon} \\
\end{aligned}
\end{equation}
Thus, any non-trivial solution to the Parabolic Cylinder D.E. will oscillate on the infinite intervals $I_1=(-\infty,-2\sqrt{a+\varepsilon}]$ and $I_2=[2\sqrt{a+\varepsilon},\infty)$. Since this condition holds for any $\varepsilon>0$, and oscillating is a global condition, we agree to say simply that the solutions oscillate on  $I_1=(-\infty,-2\sqrt{a}]$  and $I_2=[2\sqrt{a},\infty)$.\\
\vspace{1mm}\\
For the Airy we have,
\begin{equation}
\begin{aligned}
&y'' -xy = 0\\ \notag
&\Rightarrow Q(x)= -\frac{1}{4}\left(-4\left(-x \right)\right)\\
&\Leftrightarrow Q(x)=-x\\
&\Leftrightarrow Q(x)\geq\varepsilon\Leftrightarrow  x \leq -\varepsilon\\
\end{aligned}
\end{equation}
Thus, any non-trivial solution to the Airy D.E. will oscillate on the infinite interval $I=(-\infty,-\varepsilon]$. Again since this holds for any $\varepsilon>0$, we say solutions to the Airy D.E oscillate on the interval  $I=(-\infty,0]$.\\
Lastly, for the Bessel we have,
\begin{equation}
\begin{aligned}
&y''+\frac{1}{x}y'+(1-\frac{n^2}{x^2})y= 0\\ \notag
&\Rightarrow Q(x)= -\frac{1}{4}\left(\frac{1}{x^{2}}-4\left(1-\frac{n^2}{x^2}\right)-\frac{2}{x^{2}}\right)\\
&\Leftrightarrow Q(x)= -\frac{1}{4}\left(-4\left(1-\frac{n^2}{x^2}\right)-\frac{1}{x^{2}}\right)\\
&\Leftrightarrow Q(x)= -\frac{1}{4}\left(-4+\frac{4n^2}{x^2}-\frac{1}{x^{2}}\right)\\
&\Leftrightarrow Q(x)= -\frac{1}{4}\left(-4+\frac{4n^2-1}{x^2}\right)\\
&\Leftrightarrow Q(x)\geq\frac{1}{4}\varepsilon \Leftrightarrow  \left(-4+\frac{4n^2-1}{x^2}\right) \leq -\varepsilon\\
&\Leftrightarrow  x^{2}\geq\frac{4n^2-1}{ 4-\varepsilon}\\
&\Leftrightarrow  x \leq -\sqrt{\frac{4n^2-1}{ 4-\varepsilon}} \text{\hspace{3mm} or\hspace{3mm}} x \geq  \sqrt{\frac{4n^2-1}{ 4-\varepsilon}}
\end{aligned}
\end{equation}
Thus, any non-trivial solution to the Bessel D.E. will oscillate on the infinite intervals $I_1=(-\infty, -\sqrt{\frac{4n^2-1}{ 4-\varepsilon}}]$ and $I_2=[\sqrt{\frac{4n^2-1}{ 4-\varepsilon}},\infty)$. Since this holds for any $\varepsilon>0$, we say solutions to the Bessel D.E oscillate on the intervals $I_1=(-\infty, -\sqrt{n^{2}-\frac{1}{4}}]$ and $I_2=[\sqrt{n^{2}-\frac{1}{4}},\infty)$. This disagrees with the original prediction of the discriminant by an additive factor of one-half under the root.\\
\subsection{Extension to Non-Homogenous Equations}
Consider the differential equation,
\begin{equation}
y''(x) +  b(x)y'(x) + c(x)y(x) = F(x) \label{Non}\\
\end{equation}
Suppose $Q=-\frac{1}{4}\left(b^{2}-4c+2b'\right)$ is bounded below by a positive number on an infinte interval $I$, then let  $\psi_g(x)$ be the general solution to \eqref{Non}. The general solution $\psi_g(x)$ can be represented as  $\psi_g(x)=C_1\psi_1(x)+C_2\psi_2(x)+\psi_p(x)$. Where $\psi_1(x)$ and $\psi_2(x)$ are the linearly independent solutions to the corresponding homogeneous equation, and $\psi_p(x)$ is the particular solution to \eqref{Non}. Thus if $\psi(x)$ is a solution of \eqref{Non}, and $C_1$ and $C_2$ are not both zero, then $\psi(x)-\psi_p(x)$ is a non-trivial solution of \eqref{ODE3}. By Theorem 2.2 we have that $\psi(x)-\psi_p(x)$ oscillates on $I$.\\
\begin{definition}
Let $\psi(x)$ be a solution to the differential equation,
\begin{equation}
y''(x) +  b(x)y'(x) + c(x)y(x) = F(x) \notag
\end{equation}
we say that $\psi(x)$ oscillates about the particular solution $\psi_p(x)$ on an infinite interval $I$ if $\psi(x)-\psi_p(x)$ oscillates on $I$.
\end{definition}
\noindent As an example, consider the differential equation,
\begin{equation}
y'' -xy = 5x^{2} \notag
\end{equation}
This a non-homogeneous Airy type differential equation and in the figure below one can see that the solution to this equation oscillates about the particular solution $\psi_p(x)=-5x$.
\vspace{5mm}\\
\centerline{\includegraphics[scale=.2]{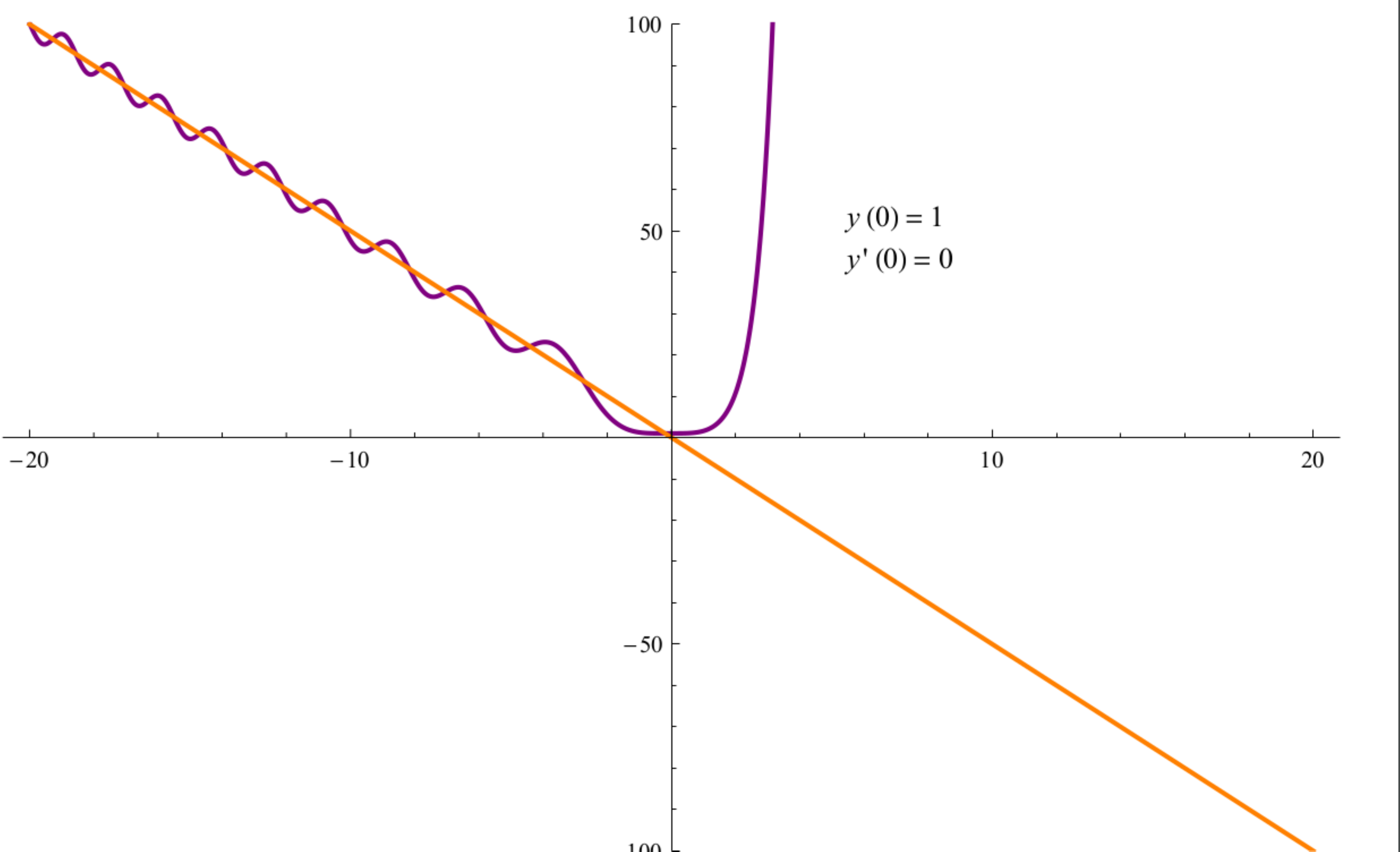}}\\
\vspace{1mm}\\

\section{The Discriminant Test}
So far we have looked at the conditions for when solutions to \eqref{ODE1} oscillate on infinite intervals via Sturm's Comparision Theorem. The original test for oscillations which motivated this research although, was the decriminant of \eqref{ODE1} discussed in Section 1. This discriminant was found to be impercise in predicting oscillations for solutions to \eqref{ODE1}  with a non-zero $b$ term. After further research and the use of Sturm's Theorem, it became apparent that the true discriminant in predicting oscillations was embedded within the $Q$ from the differential equation's normal form.

\begin{definition}
\noindent Consider the differential equation,
\begin{equation}
y''(x)+b(x)y'(x)+c(x)y(x) = 0 \label{ODE3}
\end{equation}
The  function $D(x)=b^{2}(x)-4c(x)+2b'(x)$ is called the  {\bf discriminant} of the differential equation.
\end{definition}
\begin{definition}
Let $\psi(x)$ be a twice differential non-zero real-valued function, we say that $\psi(x)$ does not oscillate on an interval $I\subseteq\mathbb{R}$, infinite or finite, if $\psi(x)$ vanishes at most once on $I$.
\\Note: This definition is not a negation of the definition of oscillating.
\end{definition}
\begin{theorem}
If the discriminant $D(x)\geq0$ of \eqref{ODE3} on a finite or infinite interval $I$, then a non-trivial solution $\psi(x)$ of \eqref{ODE3} does not oscillate on $I$
\end{theorem}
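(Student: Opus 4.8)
The plan is to pass to the normal form, where the hypothesis $D\geq 0$ becomes the statement that the coefficient $Q=-\frac{1}{4}D$ is non-positive, and then to exploit the fact that a non-positive $Q$ forces a solution of $u''+Qu=0$ to be convex wherever it is positive. First I would set $\xi(x)=\psi(x)\mathrm{e}^{\frac{1}{2}\int b\,\mathrm{d}x}$, which by the normal-form computation above solves $u''+Qu=0$ with $Q=-\frac{1}{4}\left(b^{2}-4c+2b'\right)=-\frac{1}{4}D$. Since $\mathrm{e}^{\frac{1}{2}\int b\,\mathrm{d}x}>0$ everywhere, $\psi$ and $\xi$ share exactly the same zeros, so it suffices to show that $\xi$ vanishes at most once on $I$.

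Next I would argue by contradiction: suppose $\xi$ has at least two zeros in $I$. Because $\xi$ solves a second-order linear equation, a zero at which $\xi'$ also vanished would force $\xi\equiv 0$ by the uniqueness theorem, contradicting non-triviality; hence every zero is simple and the zeros are isolated. I can therefore select two \emph{consecutive} zeros $x_1<x_2$, and on the open interval $(x_1,x_2)$ the continuous function $\xi$ keeps a constant sign, which without loss of generality I take to be positive. Thus $\xi>0$ on $(x_1,x_2)$ while $\xi(x_1)=\xi(x_2)=0$.

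Finally, on $(x_1,x_2)$ we have $\xi''=-Q\xi\geq 0$, since $Q\leq 0$ (from $D\geq 0$) and $\xi>0$. Hence $\xi$ is convex on $[x_1,x_2]$ and must lie below the chord joining $(x_1,0)$ and $(x_2,0)$; but that chord is identically zero, forcing $\xi\leq 0$ on $[x_1,x_2]$ and contradicting $\xi>0$ on the interior. Therefore $\xi$, and with it $\psi$, cannot possess two zeros on $I$, which is precisely the assertion that $\psi$ does not oscillate on $I$.

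The conceptual core, convexity of a positive solution when $Q\leq 0$, is short; the step I expect to demand the most care is the bookkeeping that produces a genuinely sign-definite pair of consecutive zeros, namely the appeal to uniqueness to exclude double zeros and conclude that the zeros are isolated. An alternative to the convexity step, closer in spirit to the proof of Sturm's Comparison Theorem above, would be to examine the Wronskian of $\xi$ against a non-vanishing linear comparison function solving $u''=0$; however, the direct convexity argument is more self-contained and avoids the strict-inequality requirement built into that theorem.
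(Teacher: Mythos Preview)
Your argument is correct. The reduction to normal form, the observation that zeros of $\psi$ and $\xi$ coincide, the isolation of zeros via the uniqueness theorem, and the convexity contradiction all go through cleanly.

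Your route differs from the paper's. The paper splits into two cases: when $D\equiv 0$ it solves the normal form explicitly as $\phi(x)=Ax+B$, and when $D>0$ it invokes Sturm's Comparison Theorem against $u''=0$, arguing that if $\phi$ had two consecutive zeros then every linear function $Ax+B$ would have to vanish between them, which one can always arrange to fail. Your convexity argument is more direct and, as you yourself note, sidesteps the strict-inequality hypothesis built into the version of Sturm's theorem proved earlier; in particular it handles the mixed situation where $D\geq 0$ with equality on part of $I$ and strict inequality elsewhere in a single stroke, whereas the paper's case split tacitly treats $D=0$ and $D>0$ as if they were mutually exclusive on all of $I$. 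The paper's approach has the virtue of reusing machinery already in hand, but yours is both cleaner and slightly more robust.
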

\begin{proof}
If $D(x)=0$ then $Q(x)=0$ in \eqref{nODE}. Therefore a solution to \eqref{nODE} has the form $\phi(x)=Ax+B$ where $A$ and $B$ are constants. This implies that any non-trivial solution to \eqref{ODE3} has the form  $\psi(x) = \mathrm{e}^{\frac{1}{2}\int{b\ \mathrm{d}x}}\left(Ax+B\right)$ via inverse transformation from normal form. Then $\psi(x)$ will vanish only when $Ax+B$ vanishes, which is at most once.\\

\noindent Let $\psi(x)$ be a nontrivial solution to \eqref{ODE3}. If $D(x)>0$ on $I$ then $Q(x)<0$, from \eqref{nODE}, on $I$ . By comparision with the equation $u''=0$, any solution, $\phi(x)$, to $u''+Q(x)u=0$ has at most one zero on $I$  since if $\phi(x)$ had more than one zero, then any solution, $u(x)=Ax+B$, of $u''=0$ must vanish between the zeros of $\phi(x)$ (Sturm's Comparison Theorem). Thich is impossible since there is always a linear function which fails this condition. By inverse transformation there exists a solution, $\phi(x)$, to \eqref{nODE} such that $\psi(x) = \mathrm{e}^{\frac{1}{2}\int{b\ \mathrm{d}x}}\phi(x)$, implying $\psi(x)$ vanishes on $I$ whenever $\phi(x)$ vanishes on $I$, which is at most once.\\
\qedhere
\end{proof}
\noindent To give an example, consider the modified Bessel differential equation,
\begin{equation}
y''+\frac{1}{x}y'+\left(1+\frac{n^2}{x^2}\right)y= 0 \notag
\end{equation}
Then $D(x)= \frac{4n^{2}-1}{x^2} + 4>0$ on $I=(-\infty,\infty)$ for $n\geq\frac{1}{2}$ \\\\
By the above theorem, solutions to this differential equation do not oscillate on $\mathbb{R}$. In particular, modified Bessel functions of the first kind, with $n\geq1$, do not oscillate on $\mathbb{R}$. This is shown in the graph below.
\vspace{5mm}\\
\centerline{\includegraphics[scale=.2]{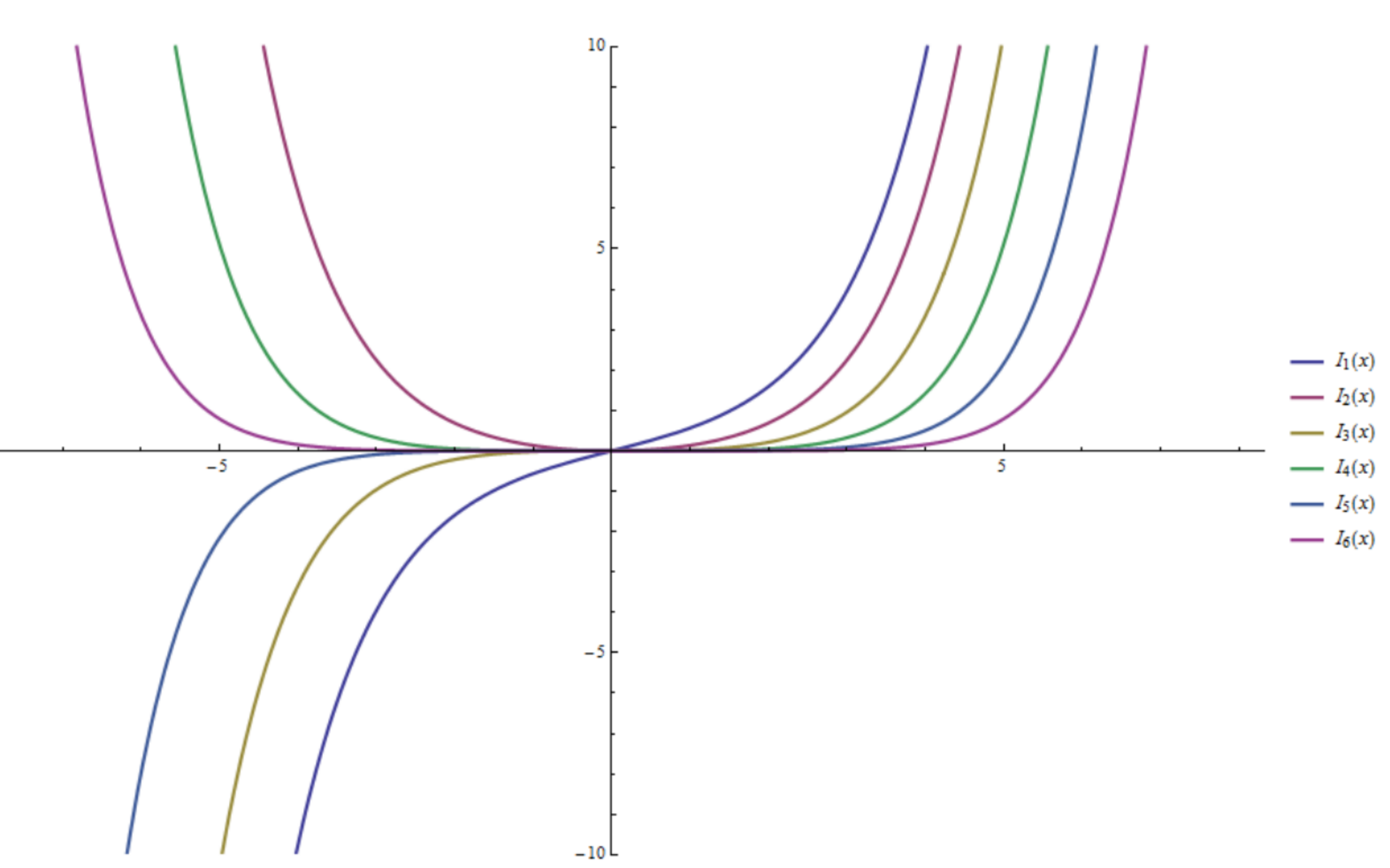}}\\
\vspace{1mm}\\
\begin{theorem}
If the discriminant $D(x)<\lambda$ for some $\lambda<0$ on an infinite interval $I$, then a non-trivial solution $\psi(x)$ of \eqref{ODE3}  oscillates on $I$.
\end{theorem}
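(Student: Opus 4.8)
The plan is to recognize this statement as an immediate corollary of Theorem 2.2, merely re-expressed in terms of the discriminant $D$ instead of the normal-form coefficient $Q$. The only bridge required is the identity supplied by the reduction to normal form: since $Q(x) = -\frac{1}{4}\left(b^{2}(x)-4c(x)+2b'(x)\right) = -\frac{1}{4}D(x)$, any hypothesis on $D$ converts directly into a hypothesis on $Q$, so that the oscillation machinery already assembled (Sturm comparison against $u''+\varepsilon^{2}u=0$) can be invoked wholesale rather than rebuilt.

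The key step is a short sign-chase. First I would fix $\lambda<0$ with $D(x)<\lambda$ on $I$ and multiply through by $-\frac{1}{4}$, which reverses the inequality to give $Q(x) = -\frac{1}{4}D(x) > -\frac{\lambda}{4}$ for every $x\in I$. Because $\lambda<0$, the constant $-\frac{\lambda}{4}$ is strictly positive; setting $\varepsilon^{2} := -\frac{\lambda}{4}$ (equivalently $\varepsilon = \frac{1}{2}\sqrt{-\lambda}>0$) exhibits $Q$ as bounded below by the positive number $\varepsilon^{2}$ uniformly on $I$. This is precisely the hypothesis of Theorem 2.2, met verbatim on the infinite interval $I$.

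With that in hand I would simply apply Theorem 2.2 to conclude that any non-trivial solution $\psi(x)$ of \eqref{ODE3} oscillates on $I$, completing the argument.

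There is no genuinely hard part here, as the analytic content is entirely front-loaded into Theorem 2.2 and the normal-form reduction. The single point demanding care is that $D$ must be bounded strictly away from $0$ by a fixed negative $\lambda$, not merely be negative pointwise — mirroring the caution following Theorem 2.2 that $Q>0$ alone is insufficient (witness $y''+\frac{1}{4x^{2}}y=0$). I would therefore emphasize that the strictness of the uniform bound $D<\lambda<0$ is exactly what prevents $Q$ from decaying toward $0$ and so guarantees that the comparison equation $u''+\varepsilon^{2}u=0$ retains infinitely many zeros on $I$, which is what drives the oscillation.
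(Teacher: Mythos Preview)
Your proposal is correct and matches the paper's own treatment: the paper does not give a separate proof but simply remarks that this theorem ``is simply a reformulation of Theorem 2.2, but using the discriminant as the focal point.'' You have spelled out exactly that reformulation via the identity $Q=-\tfrac{1}{4}D$ and the sign-chase $D<\lambda<0 \Rightarrow Q>-\tfrac{\lambda}{4}>0$, which is precisely what the paper intends.
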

\noindent This is simply a reformulation of Theorem 2.2, but using the discriminant as the focal point.\\\\
\noindent Notice that $D= b^{2}-4c$ in the case that $b(x)$ and $c(x)$ are constant functions, since $b'(x)=0$. This is the same definition as the original discriminant used for the constant coefficient case.
\section{Future Research}

The discriminant test yields sufficient criteria for oscillations on infinite intervals, and non-oscillations on infinite and finite interval. Beyond this, the discriminant seems to predict oscillations (in the sense of successive maxima and minima) on finite intervals as well. Before the topic of oscillations on finite intervals can be treated properly, one must define what it means to oscillate on a finite interval. Does a 10th order polynomial oscillate on an intercal that contains all its maxima and minima, but not on one that contain only two maxima and minima? Does a sinusoidal function oscillate on the interval $[0,\pi]$? These are the types of questions one comes across when trying to formulate a proper definition for oscillating; one that is mathematically rigorous, yet still agrees with one's intuition of what it means to oscillate.\\\\
\newpage
\noindent Future research will include properly defining oscillations on finite intervals, with sufficient and possibly neccessary conditions for oscillation based on the simple test of the discriminant. The discriminant test will then be extended to the cases of non-linear second order differential equations, and applied to differential equations of physical systems to quickly determine whether or not solutions will oscillate or not. The field of functional analysis, and more specifically oscillation theory, can give deep insight into the oscillatory behavior of solutions to second order equations, but involve very difficult and advanced methods of mathematics. The discriminant gives the basic behavior of the system, using only the knowledge prior of an undergraduate course in differential equations. This makes the discriminant another fast and effective tool for extracting the basic properties of solutions to second order equations.  Once one determines this basic oscillatory behavior, further analysis can be done using more advanced methods described above.

\newpage

\enddocument